\newcommand{\RR}{\mathbb R}
\newcommand{\ZZ}{\mathbb Z}
\newcommand{\QQ}{\mathbb Q}
\newcommand{\CC}{\mathbb C}
\newcommand{\OO}{\mathcal O}
\DeclareMathOperator{\SL}{SL}
\DeclareMathOperator{\SO}{SO}
\DeclareMathOperator{\GL}{GL}
\DeclareMathOperator{\Tr}{Tr}
\DeclareMathOperator{\Norm}{N}
\renewcommand{\Re}{\operatorname{Re}}
\DeclareMathOperator{\diag}{diag}
\DeclareMathOperator{\Cone}{Cone}
\newcommand{\Vor}{Vorono\"{\i}\xspace}
\newcommand{\hp}{\mathfrak{H}}
\newcommand{\B}{\mathcal{B}}
\newcommand{\x}{\mathbf{x}}
\newcommand{\mat}[1]{\begin{bmatrix} #1 \end{bmatrix}}%
\theoremstyle{plain}
\newtheorem{thm}{Theorem}[section]
\newtheorem{lem}[thm]{Lemma}
\newtheorem{prop}[thm]{Proposition}
\theoremstyle{definition}
\newtheorem{defn}[thm]{Definition}
\renewcommand{\tilde}{\widetilde}
\newcommand{\bbb}{\mathfrak{b}}
\newcommand{\aaa}{\mathfrak{a}}
\newcommand{\fff}{\mathfrak{f}}
\newcommand{\innprod}[2]{\langle #1, #2\rangle}
\begin{document}

\title{Perfect forms over totally real number fields}
\author{Paul E.~Gunnells and Dan Yasaki}
\address{Department of Mathematics and Statistics\\University of Massachusetts\\Amherst, MA 01003-9305}
\address{Department of Mathematics and Statistics\\146 Petty Building\\ 
University of North Carolina at Greensboro\\Greensboro, NC 27402-6170}
\email{gunnells@math.umass.edu}
\email{d\_yasaki@uncg.edu}
\date{August 18, 2009}
\thanks{The first named author wishes to thank the
National Science Foundation for support of this research through NSF
grant DMS-0801214.}
\keywords{Perfect forms, modular symbols, well-rounded retract,
Eisenstein cocycle}
\subjclass{11H55, 11E10, 11F67, 11F75}
\begin{abstract}
A rational positive-definite quadratic form is \emph{perfect} if it
can be reconstructed from the knowledge of its minimal nonzero value
$m$ and the finite set of integral vectors $v$ such that $f (v) = m$.
This concept was introduced by \Vor and later generalized by Koecher
to arbitrary number fields.  One knows that up to a natural ``change
of variables'' equivalence, there are only finitely many perfect
forms, and given an initial perfect form one knows how to explicitly
compute all perfect forms up to equivalence.  In this paper we
investigate perfect forms over totally real number fields.  Our main
result explains how to find an initial perfect form for any such
field.  We also compute the inequivalent binary perfect forms over
real quadratic fields $\QQ (\sqrt{d})$ with $d\leq 66$.
\end{abstract}
\bibliographystyle{alpha}

\maketitle
\begin{section}{Introduction}

Let $f$ be a positive-definite rational quadratic form in $n$
variables.  Let $m (f)$ be the minimal nonzero value attained by $f$
on $\ZZ^{n}$, and let $M (f)$ be the set of vectors $v$ such that $f
(v) = m (f)$.  \Vor defined $f$ to be \emph{perfect} if $f$ is
reconstructible from the knowledge of $m (f)$ and $M (f)$
\cite{voronoi1}.  \Vor's theory was later extended by Koecher to a
much more general setting that includes quadratic forms over arbitrary
number fields $F$ \cite{koecher}.  Koecher also generalized a
fundamental result of \Vor , which says that modulo a natural $\GL_{n}
(\OO)$-equivalence, where $\OO$ is the ring of integers of $F$, there
are only finitely many $n$-ary perfect forms.  Moreover, there is an
explicit algorithm to determine the set of inequivalent perfect forms,
given the input of an initial perfect form \cite{voronoi1, Gmod}.

\Vor proved that the quadratic form $A_{n}$ is perfect for all $n$,
and using this was able to classify $n$-ary rational perfect forms for
$n\leq 5$.  In this paper, we consider totally real fields $F$ and
explain how to construct an initial perfect form.  Rather than trying
to give a closed form expression of such a form, we show how to use
the geometry of symmetric spaces and modular symbols to find an
initial perfect form.  A key role is played by the notion of
\emph{lattices of $E$-type} \cite{Kit}.  For $F$ real quadratic and
$n=2$, we carry out our construction explicitly to compute all
inequivalent binary perfect forms for $F=\QQ (\sqrt{d})$, $d\leq 66$.
These results complement work of Leibak and Ong \cite{leiants, lei6,
Ong}.

Our main interest in \Vor and Koecher's results is that they provide
topological models for computing the cohomology of subgroups
of $\GL_{n} (\OO)$, where $F$ is any number field.  This
cohomology gives a concrete realization of certain automorphic forms
that conjecturally have deep connections with arithmetic
geometry.  The cohomology of subgroups of $\GL_{n} (\ZZ)$, for
instance, has a (well known) relationship with holomorphic modular
forms when $n=2$, and for higher $n$ has connections with $K$-theory,
multiple zeta values, and Galois representations \cite{exp.ind, gs,
gonch}.  The cohomology of subgroups of $\GL_{2} (\OO)$, when $F$ is
totally real, is related to Hilbert modular forms \cite{freitag}.
However, computing these models for any example, a prerequisite for
using them to explicitly compute cohomology, is a nontrivial problem
as soon as $\OO \not = \ZZ$ or $n>2$.  Our work is a first step
towards more cohomology computations for $F$ totally real,
computations that we plan to pursue in the future.

\subsection*{Acknowledgements} We thank Avner Ash, Farshid Hajir, and
Mark McConnell for helpful conversations and their interest in our
work.

\end{section}

\begin{section}{Preliminaries}
Let $F$ be a totally real number field of degree $m$ with ring of
integers $\OO$.  Let $\iota = (\iota_1, \ldots , \iota_m)$ denote the
$m$ embeddings $F \to \RR$.  For $z \in F$, let $z^k$ denote
$\iota_k(z)$.  We extend this notation to other $F$-objects.  For
example, if $A=\mat{a_{ij}}$ is a matrix with entries in $F$,
then $A^k$ denotes the real matrix $A^k = \mat{a^k_{ij}}$.  An element
$z\in F$ is called totally positive if $z^{k}>0$ for each $k$.  We
write $z\gg 0$ if $z$ is totally positive.

\subsection{$n$-ary quadratic forms over $F$} An \emph{$n$-ary
quadratic form over $F$} is a map $f:\OO^n \to \QQ$ of the form
\begin{equation}\label{eq:evaluation}
f(x_1, \ldots, x_n) = \Tr_{F/\QQ}\biggl(\sum_{1\leq i,j\leq n} a_{ij} x_i x_j\biggr),
\quad \text{where $a_{ij} \in F$.}
\end{equation}
Our main object of study will be \emph{positive-definite} $n$-ary
quadratic forms over $F$. Specifically, since $\OO^n \simeq \ZZ^{nm}$,
$f$ can be viewed as a quadratic form on $\ZZ^{nm}$.  We require that
under this identification $f$ is a positive-definite quadratic form on
$\ZZ^{nm}$.  Equivalently, we can use the $m$ embeddings $\iota \colon
F \to \RR^{m}$ to view $f$ as a $m$-tuple $(f^1, \ldots , f^m)$ of
real quadratic forms of $\ZZ^n$.  It is easy to check that a quadratic
form over $F$ is positive-definite if and only if each $f^i$ is
positive-definite.

\subsection{Minimal vectors} There is a value $m(f)$ associated to
each positive-definite quadratic form $f$, called the \emph{minimum} of
$f$, given by
\[m(f) = \min_{v \in \OO^n\smallsetminus \{0 \}} f(v).\]
\begin{defn}
A vector $v \in \OO^n\smallsetminus \{0 \}$ is a \emph{minimal vector for
$f$} if $f(v)= m(f)$.  The set of minimal vectors is denoted $M (f)$.
\end{defn}
Note that $v$ is a minimal vector for $f$ if and only if $-v$ is as
well.  In our considerations the distinction between $v$ and $-v$ will
be irrelevant, and so we abuse notation and let $M (f)$ denote a set
of representatives for the minimal vectors modulo $\{\pm 1 \}$.

\subsection{Perfect forms} For most quadratic forms, knowledge of the
set $M (f)$ is not enough to reconstruct $f$.  A simple example is
provided by the one-parameter family of rational quadratic forms
$$f_{\lambda }(x,y)= x^{2}+\lambda xy+y^{2}, \quad \lambda \in (-1,1)\cap \QQ, $$
all of which are easily seen to satisfy $M (f_{\lambda}) = \{e_{1},
e_{2} \}$, where the $e_{i}$ are the standard basis vectors of
$\ZZ^{2}$.  On the other hand the rational binary form 
\[
g (x,y)= x^{2}+xy+y^{2}
\]
is reconstructible from the data of $\{M (g), m (g) \}$
(cf. \S\ref{ss:an}), which equals
\[
\bigl\{\{e_{1}, e_{2}, e_{1}-e_{2} \}, 1 \bigr\}.
\]
We formalize this notion, due to \Vor for rational quadratic
forms, with the following definition:
\begin{defn}[{\cite[\S 3.1]{koecher}}]
A positive-definite quadratic form $f$ over $F$ is said to be
\emph{perfect} if $f$ is uniquely determined by its minimum value
$m(f)$ and its minimal vectors $M(f)$.  That is, given the data $ \{M
(f), m (f) \} $, the system of linear equations
\begin{equation}\label{eq:linsystem}
\left\{\Tr_{F/\QQ}(v^tXv) = m(f)\right\}_{v \in M(f)}
\end{equation}
has a unique solution.
\end{defn}

We warn the reader that there are other notions of perfection for
quadratic forms over number fields in the literature, notably in the
work of Icaza \cite{icaza} and Coulangeon \cite{Cou}.  All notions
involve the reconstruction of $f$ from its minimal vectors, but these
authors use the norm where we have used the trace in the evaluation
\eqref{eq:evaluation} of a form on a vector in $\OO^{n}$.  Moreover,
Coulangeon uses a larger group to define equivalence of forms.
\end{section}

\begin{section}{Positive lattices}

\subsection{Lattices of $E$-type}
For these results we follow \cite{Kit}.
\begin{defn}
Let $V/\QQ$ be a vector space with positive-definite quadratic form
$\phi$.  A lattice $L \subset V$ is a \emph{positive lattice for
$\phi$} if for a $\ZZ$-basis $\B=\{e_1, \ldots, e_n\}$ of $L$, the
associated symmetric matrix for $\phi$ in $\B$-coordinates has
rational entries.
\end{defn}
We will denote a positive lattice for $\phi$ by pair $(L,\phi)$.  As
before, one can define minimal vectors and the minimum for a positive
lattice $(L,\phi)$.  We denote these $M(\phi)$ and $m(\phi)$, with $L$
understood.

Given two positive lattices $(L_1,\phi_1)$ and $(L_2,\phi_2)$, with
$L_1 \subset V_1$ and $L_2 \subset V_2$, one can construct a new
positive lattice $(L_1 \otimes L_2,\phi_1 \otimes \phi_2)$.
Specifically, let $B_1$ denote the symmetric bilinear form giving rise
to $\phi_1$, and let $B_2$ denote the symmetric bilinear form giving
rise to $\phi_2$.  We define a symmetric bilinear form $B$ on $V_1
\otimes V_2$ by first defining
\[B(v \otimes w, \hat v \otimes \hat w) = B_1(v , \hat v )B_2(w, \hat w)\]
on simple tensors and then by linearly extending to all of
$V_{1}\otimes V_{2}$.  Then one has a positive-definite quadratic form
$\phi = \phi_1 \otimes \phi_2$ on $V_1 \otimes V_2$ given by $\phi(x)
= B(x,x)$.  Note that by construction, we have $\phi(v \otimes w) =
\phi_1(v)\phi_2(w)$.

\begin{defn}
A positive lattice $(L,\phi)$ is of \emph{$E$-type} if
\[M(\phi \otimes \phi') \subset \{u \otimes v\;|\; u \in L, v \in L'\}\]
for every positive lattice $(L',\phi')$.
\end{defn}

In other words, a lattice is $E$-type if, whenever it is tensored with
another positive lattice, the minimal vectors of the tensor product
decompose as simple tensors.  Positive lattices of $E$-type are
particularly well-behaved with respect to tensor product:
\begin{prop}[{\cite[Lemma 7.1.1]{Kit}}]\label{prop:Etype}
Let $(L,\phi)$ and $(L', \phi')$ be positive lattices.  If $(L,\phi)$ is of $E$-type, then
\begin{enumerate}
\item $m(\phi \otimes \phi') = m(\phi)m(\phi'),$ and 
\item $M(\phi \otimes \phi') = \{x \otimes y \;|\; x \in M(\phi),\ y \in M(\phi')\}.$
\end{enumerate} 
\end{prop}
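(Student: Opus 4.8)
The plan is to prove both statements simultaneously by first establishing the inequality $m(\phi\otimes\phi')\le m(\phi)m(\phi')$ for free, then using the $E$-type hypothesis to force the reverse inequality and simultaneously pin down the minimal vectors. The easy direction comes directly from the multiplicativity property recorded at the end of the definition of the tensor product: for any $x\in M(\phi)$ and $y\in M(\phi')$ the simple tensor $x\otimes y$ lies in $(L\otimes L')\smallsetminus\{0\}$ and satisfies $\phi\otimes\phi'(x\otimes y)=\phi(x)\phi'(y)=m(\phi)m(\phi')$, so $m(\phi\otimes\phi')\le m(\phi)m(\phi')$.

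For the reverse inequality, let $w\in M(\phi\otimes\phi')$ be a minimal vector of the tensor product. Since $(L,\phi)$ is of $E$-type, $w=u\otimes v$ for some $u\in L$, $v\in L'$; because $w\ne 0$ we may assume $u\ne 0$ and $v\ne 0$. Then
\[
m(\phi\otimes\phi')=\phi\otimes\phi'(u\otimes v)=\phi(u)\phi'(v)\ge m(\phi)m(\phi'),
\]
using the definition of the minimum for each factor together with positive-definiteness (so $\phi(u),\phi'(v)>0$). Combining with the first paragraph gives $m(\phi\otimes\phi')=m(\phi)m(\phi')$, which is part (i).

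For part (ii), the inclusion $\supseteq$ is immediate from the first paragraph: every $x\otimes y$ with $x\in M(\phi)$, $y\in M(\phi')$ attains the value $m(\phi)m(\phi')=m(\phi\otimes\phi')$ and hence lies in $M(\phi\otimes\phi')$. For $\subseteq$, take $w\in M(\phi\otimes\phi')$; by $E$-type write $w=u\otimes v$ with $u,v\ne 0$, so $\phi(u)\phi'(v)=m(\phi)m(\phi')$. Since $\phi(u)\ge m(\phi)$ and $\phi'(v)\ge m(\phi')$ and all four quantities are positive, equality in the product forces $\phi(u)=m(\phi)$ and $\phi'(v)=m(\phi')$; that is, $u\in M(\phi)$ and $v\in M(\phi')$. (One should note the harmless ambiguity that $u\otimes v=(-u)\otimes(-v)$, consistent with working modulo $\pm 1$ as in the convention for $M$.)

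The only genuinely delicate point is the decomposition step itself, and that is handed to us by the hypothesis that $(L,\phi)$ is of $E$-type, whose very definition is tailored to guarantee $M(\phi\otimes\phi')\subset\{u\otimes v\}$. Thus the ``hard part'' has been absorbed into the definition, and what remains is the elementary observation that a product of two quantities, each bounded below by a positive number, equals the product of those lower bounds only when each factor is minimal. I would present the argument in essentially the three-paragraph form above, making the positivity hypotheses explicit since they are what rules out degenerate cancellations.
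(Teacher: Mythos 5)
Your proof is correct. Note, however, that the paper gives no proof of this proposition at all: it is quoted directly from Kitaoka (\emph{Arithmetic of quadratic forms}, Lemma 7.1.1), so there is nothing in the source to compare against. Your argument is the standard one and is complete given the paper's definition of $E$-type: the inequality $m(\phi\otimes\phi')\le m(\phi)m(\phi')$ from the multiplicativity $\phi\otimes\phi'(x\otimes y)=\phi(x)\phi'(y)$, the reverse inequality from the $E$-type decomposition of a minimal vector into a simple tensor with both factors nonzero (which holds for any such decomposition, since in a tensor product of free $\ZZ$-modules a simple tensor vanishes exactly when one factor does), and the rigidity observation that a product of two quantities bounded below by positive numbers attains the product of the bounds only when each factor does. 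Your parenthetical remark about the non-uniqueness of the decomposition $u\otimes v=(-u)\otimes(-v)$ is the right thing to flag, and it is indeed harmless here.
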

\subsection{The form $A_n$}\label{ss:an} We now give an example
that will be important in the sequel.  Let $A_{n}$ be the rational
quadratic form
\[A_n(x_1,\ldots , x_n) = \sum_{1 \leq i \leq j \leq n}x_ix_j.\]
It is easy to check, as was first done by \Vor \cite{voronoi1}, that
$A_n$ is perfect.  One computes \[m(A_n) = 1 \quad
\text{and} \quad M(A_n) = \{e_i\} \cup \{e_i - e_j\}.\]  According to 
\cite[Theorem~7.1.2]{Kit}, $(\ZZ^n, A_n)$ is of $E$-type.  

\subsection{Application to $n$-ary
quadratic forms over $F$} Fix $\alpha \in F$ totally positive.
Consider the $n$-ary quadratic form over $F$ given by
\begin{equation}\label{eqn:falpha}
f_\alpha(x_1, \ldots x_n) = \Tr_{F/\QQ}\left(\alpha A_n(x_1, \ldots , x_n)\right).\end{equation}

\begin{lem}\label{lem:split}
We have
\[(\OO^n,f_\alpha) = (\OO \otimes \ZZ^n, \phi_\alpha \otimes A_n),\]
where $\phi_\alpha(x) = \Tr_{F/\QQ}(\alpha x^2)$.
\end{lem}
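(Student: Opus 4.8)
The claim is essentially that the form $f_\alpha$, viewed as a quadratic form on $\OO^n \cong \ZZ^{nm}$, is literally the tensor product of $(\OO, \phi_\alpha)$ and $(\ZZ^n, A_n)$. The plan is to unwind both sides as quadratic forms on the same underlying $\ZZ$-module and check they agree. First I would fix the identification of underlying lattices: a $\ZZ$-basis $\{\omega_1, \dots, \omega_m\}$ of $\OO$ together with the standard basis $\{e_1, \dots, e_n\}$ of $\ZZ^n$ gives a $\ZZ$-basis $\{\omega_i \otimes e_j\}$ of $\OO \otimes_\ZZ \ZZ^n$, and under the obvious isomorphism $\OO \otimes_\ZZ \ZZ^n \isomto \OO^n$ sending $\omega \otimes e_j$ to the vector with $\omega$ in slot $j$, this matches the $\ZZ$-basis of $\OO^n$ used to view $f_\alpha$ as a form on $\ZZ^{nm}$. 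So there is no ambiguity about which lattices we are comparing.

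Next I would compare the bilinear forms on simple tensors. Let $B_\alpha$ be the symmetric $\QQ$-bilinear form on $\OO$ (extended to $F$) with $B_\alpha(x,x) = \Tr_{F/\QQ}(\alpha x^2)$, so that $B_\alpha(x,y) = \Tr_{F/\QQ}(\alpha x y)$, and let $B_{A_n}$ be the bilinear form attached to $A_n$, so $B_{A_n}(e_i, e_j) = \tfrac12$ for $i \neq j$ and $B_{A_n}(e_i,e_i) = 1$. By the definition of the tensor product form recalled just above, the bilinear form $B = B_\alpha \otimes B_{A_n}$ satisfies
\[
B(x \otimes v,\ y \otimes w) = \Tr_{F/\QQ}(\alpha x y)\, B_{A_n}(v,w)
\]
on simple tensors. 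Evaluating on a general element $u = \sum_j x_j \otimes e_j$ (corresponding to the vector $(x_1, \dots, x_n) \in \OO^n$) and using bilinearity gives
\[
(\phi_\alpha \otimes A_n)(u) = B(u,u) = \sum_{i,j} \Tr_{F/\QQ}(\alpha x_i x_j) B_{A_n}(e_i, e_j) = \Tr_{F/\QQ}\Bigl(\alpha \sum_{i \leq j} x_i x_j\Bigr),
\]
where in the last step I collect the $i=j$ and $\{i,j\}$ terms and use linearity of the trace. The right-hand side is exactly $\Tr_{F/\QQ}(\alpha A_n(x_1, \dots, x_n)) = f_\alpha(x_1, \dots, x_n)$, so the two forms coincide on $\OO^n$.

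Finally I would note that $(\OO, \phi_\alpha)$ really is a positive lattice in the sense defined above: since $\alpha \gg 0$, the form $x \mapsto \Tr_{F/\QQ}(\alpha x^2)$ is positive-definite on $\OO \otimes \RR$ (it is the sum $\sum_k \alpha^k (x^k)^2$ of positive-definite forms under the embeddings), and its Gram matrix in the basis $\{\omega_i\}$ has entries $\Tr_{F/\QQ}(\alpha \omega_i \omega_j) \in \QQ$. Hence the tensor product construction applies and the identification above is legitimate. There is no real obstacle here; the only thing requiring a moment's care is the bookkeeping identifying the $\ZZ$-basis $\{\omega_i \otimes e_j\}$ of the tensor product with the standard $\ZZ$-basis underlying the description of $f_\alpha$ as a form on $\ZZ^{nm}$, so that "$=$" in the statement is an honest equality of positive lattices and not merely an isometry.
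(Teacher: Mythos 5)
Your proof is correct and follows essentially the same direct-computation route as the paper, unwinding the tensor-product form against the definition of $f_\alpha$. If anything, your version is slightly more thorough: the paper evaluates both forms only on simple tensors $a \otimes \x$ (i.e., on vectors of the form $a\x \in \OO^n$), whereas you polarize and evaluate the bilinear form on a general element $\sum_j x_j \otimes e_j$, which is what one actually needs to conclude equality of the two quadratic forms on all of $\OO^n$.
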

\begin{proof}
It is clear that as $\ZZ$-modules, we have $\OO^n \simeq \OO \otimes \ZZ^n$.  Thus we
want to show that under this isomorphism, the quadratic form
$f_{\alpha}$ on $\OO^{n}$ is taken to the quadratic form $\phi_\alpha
\otimes A_n$ on $\OO \otimes \ZZ^n$.  We do this by explicit
computation.  Let $a \in \OO$
and let $\x = \sum x_ie_i \in \ZZ^n$.  Then we have
\begin{align*}
(\phi_\alpha \otimes A_n)(a \otimes \x) &= \phi_\alpha(a)A_n(\x)\\
&=\Tr_{F/\QQ}\left(\alpha a^2\right)\sum_{1 \leq i \leq j \leq n} x_i x_j\\
&=\Tr_{F/\QQ}\Bigl(\alpha \sum_{1 \leq i \leq j \leq n}a^2 x_ix_j\Bigr)\\
&=\Tr_{F/\QQ}\left(\alpha A_n(a\x)\right)\\
&=f_\alpha(a\x),
\end{align*}
which completes the proof.
\end{proof}

\begin{thm}\label{thm:scaledAnform}
Let $f_\alpha$ be as in \eqref{eqn:falpha}.  Then there exist nonzero
$\eta_1, \ldots , \eta_r\in \OO$ such that
\begin{enumerate}
\item the minimum of $f_\alpha$ is \[m(f_\alpha) = m(\phi_\alpha) =
\Tr_{F/\QQ}(\alpha \eta_i^2),\quad \text{and}\]
\item the minimal vectors of $f_\alpha$ are
\[M(f_\alpha) = \bigcup_{1 \leq k \leq r} \{\eta_k e_i\} \cup \{\eta_k(e_i-e_j)\}.\]
\end{enumerate}
\end{thm}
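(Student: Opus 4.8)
The plan is to combine Lemma~\ref{lem:split}, Proposition~\ref{prop:Etype}, and the fact that $(\ZZ^n, A_n)$ is of $E$-type. By Lemma~\ref{lem:split} we have an identification of positive lattices $(\OO^n, f_\alpha) = (\OO \otimes \ZZ^n, \phi_\alpha \otimes A_n)$, so it suffices to analyze the minimum and minimal vectors of the tensor product $\phi_\alpha \otimes A_n$. Since $(\ZZ^n, A_n)$ is of $E$-type (by \cite[Theorem~7.1.2]{Kit}), Proposition~\ref{prop:Etype}, applied with the roles of the two lattices interchanged so that the $E$-type factor is $(\ZZ^n,A_n)$, gives
\[
m(\phi_\alpha \otimes A_n) = m(\phi_\alpha) m(A_n) \quad\text{and}\quad
M(\phi_\alpha \otimes A_n) = \{\, x \otimes y \;|\; x \in M(\phi_\alpha),\ y \in M(A_n)\,\}.
\]

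First I would record that $m(A_n) = 1$ and $M(A_n) = \{e_i\} \cup \{e_i - e_j\}$, as computed in \S\ref{ss:an}. Plugging this in, $m(f_\alpha) = m(\phi_\alpha)$, which establishes the first displayed equality in part~(i). Next I would introduce notation for the minimal vectors of the one-variable form $\phi_\alpha$: let $\eta_1, \ldots, \eta_r \in \OO \smallsetminus \{0\}$ be a set of representatives for $M(\phi_\alpha)$ modulo $\{\pm 1\}$. By definition of the minimum, $\phi_\alpha(\eta_k) = m(\phi_\alpha)$ for each $k$, i.e. $\Tr_{F/\QQ}(\alpha \eta_k^2) = m(\phi_\alpha)$; this is the same for all $k$ and completes part~(i). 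Then, translating the tensor description of $M(\phi_\alpha \otimes A_n)$ back through the isomorphism $\OO \otimes \ZZ^n \simeq \OO^n$ of Lemma~\ref{lem:split} (under which $a \otimes \x \mapsto a\x$), the minimal vectors are exactly the products $\eta_k \cdot v$ with $v \in M(A_n)$, that is $\eta_k e_i$ and $\eta_k(e_i - e_j)$, giving part~(ii). One should also note that $M(\phi_\alpha)$ is nonempty and finite (it is the minimal-vector set of a positive lattice), so $r \geq 1$ and the union is finite.

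The only real subtlety is making sure Proposition~\ref{prop:Etype} is applied correctly: the proposition as stated requires the \emph{first} factor to be of $E$-type, whereas here it is the second factor $(\ZZ^n, A_n)$ that has this property. Since tensor product of lattices is symmetric (the bilinear form $B$ on $V_1 \otimes V_2$ is symmetric in the two factors, and $\phi \otimes \phi' = \phi' \otimes \phi$ up to the canonical isomorphism $V_1 \otimes V_2 \simeq V_2 \otimes V_1$), this is harmless, but I would state it explicitly to avoid confusion. Everything else is bookkeeping: unwinding the identifications and matching up the two sets of minimal vectors. I do not anticipate any genuine obstacle beyond being careful with this symmetry and with the $\{\pm 1\}$ conventions on minimal vectors.
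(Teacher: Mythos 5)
Your proposal is correct and follows exactly the paper's argument: identify $(\OO^n,f_\alpha)$ with $(\OO\otimes\ZZ^n,\phi_\alpha\otimes A_n)$ via Lemma~\ref{lem:split}, apply Proposition~\ref{prop:Etype} using that $(\ZZ^n,A_n)$ is of $E$-type, and take the $\eta_k$ to be the minimal vectors of $\phi_\alpha$. The paper's proof is a one-line version of this; your explicit remark about the symmetry of the tensor product (since the $E$-type factor appears second) is a reasonable bit of extra care that the paper leaves implicit.
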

\begin{proof}
Since $(\ZZ^n,A_n)$ is of $E$-type, the result follows from
Proposition~\ref{prop:Etype} and Lemma~\ref{lem:split} by taking
$\{\eta_i\}$ to be the minimal vectors for $\phi_\alpha$.
\end{proof}

\end{section}

\begin{section}{The geodesic action, the well-rounded retract, and the
Eisenstein cocycle} 

In this section we present three tools that play a key role in the
proof of Theorem \ref{thm:main}.  The \emph{geodesic action} \cite{BS}
is an action of certain tori on locally symmetric spaces.  The
\emph{well-rounded retract} \cite{A2} is a deformation retract of
certain locally symmetric spaces.  The \emph{Eisenstein cocycle}
\cite{eisenstein, dedekind} is a cohomology class for $\SL_{m} (\ZZ)$ that gives
a cohomological interpretation of special values of the partial zeta
functions of totally real number fields of degree $m$.

\subsection{Geodesic action}\label{ss:ga}
Let $G$ be a semisimple connected Lie group, let $K$ be a maximal
compact subgroup, and let $X$ be the symmetric space $G/K$.  Fix a
basepoint $x \in X$.  This choice of basepoint determines a Cartan
involution $\theta_x$.  For a parabolic subgroup $P\subset G$, the
Levi quotient is $L_P = P/N_P$, where $N_P$ is the unipotent radical
of $P$.  Let $A_P$ denote the (real points of) the maximal $\QQ$-split
torus in the center of $L_P$, and let $A_{P,x}$ denote the unique lift
of $A_P$ to $P$ that is stable under the Cartan involution
$\theta_{x}$.

Since $P$ acts transitively on $X$, every point $z \in X$ can be
written as $z = p\cdot x$ for some $p \in P$.  Then Borel--Serre
define the geodesic action of $A_P$ on $X$ by
\[a \circ z  = (p \tilde{a}) \cdot x,\]
where $\tilde{a}$ is the lift of $a$ to $A_{P,x}$.
This action is independent of the choice of basepoint $x$, justifying
the notation.  Note that at the basepoint $x$, the geodesic action of
$A_{P}$ agrees with the ordinary action of its lift $A_{P,x}$.

\subsection{Well-rounded retract}\label{ss:wr} Now let $G=\SL_{m}
(\RR)$, $K=\SO (m)$.  The space $X$ is naturally isomorphic to the
space of $m$-ary positive-definite real quadratic forms modulo
homotheties.  Indeed, this follows easily from the Cholesky
decomposition from computational linear algebra: if $S$ is a symmetric
positive-definite matrix of determinant $1$, then there exists a
matrix $g\in G$ such that $gg^{t} = S$.

Let $W\subset X$ be the subset consisting of all forms whose
minimal vectors span $\RR^{m}$.  Then Ash proved that $W$ is an
$\SL_{m} (\ZZ)$-equivariant deformation retract of $X$.  Moreover $W$
naturally has the structure of a cell complex with polytopal cells,
and $\SL_{m} (\ZZ)$, and thus any finite-index subgroup $\Gamma\subset
\SL_{m} (\ZZ)$, act cellularly on $W$ with finitely many orbits.  The
retract can be used to compute the cohomology of $\Gamma$ for certain
$\ZZ \Gamma$-modules in the following way.  Let $M$ be a $\ZZ
\Gamma$-module attached to a rational representation of $\SL_{m}
(\QQ)$ and let $\tilde{M}$ be the associated local coefficient system
on $\Gamma \backslash X$.  Then we have isomorphisms
\[
H^{*} (\Gamma; M) \simeq H^{*} (\Gamma \backslash X; \tilde{M}) \simeq
H^{*} (\Gamma \backslash W; \tilde{M}).
\]

\subsection{Eisenstein cocycle}\label{ss:ec} As before let $\OO$ be
the ring of integers in a totally real number field of degree $m$.
Let $\bbb , \fff \subset \OO$ be relatively prime ideals.  The partial
zeta function $\zeta (\bbb , \fff ; s)$ attached to the ray class
$\bbb \pmod \fff$ is defined by the analytic continuation of the
Dirichlet series
\[
\zeta (\bbb , \fff ; s) = \sum_{\aaa} \Norm (\aaa)^{-s},\quad \Re
(s)>1,
\]
where the sum is taken over all integral ideals $\aaa$ such that $\aaa
\bbb^{-1}$ is principal with a totally positive generator in the coset
$1+\fff \bbb^{-1}$.  By the theorem of Klingen--Siegel \cite{siegel},
the special values $\zeta (\bbb, \fff ; 1-k)$, where $k\in \ZZ_{>0}$, are
rational. 

The special values have a cohomological interpretation.  Let $U$ be
the group of totally positive units in the coset $1+\fff$.  Sczech
constructed a sequence of rational cocycles $\eta (\bbb , \fff , k)\in
H^{m-1} (U; \QQ)$ which give the numbers $\zeta (\bbb, \fff ; 1-k)$ by
evaluation on the fundamental cycle in $H_{m-1} (U; \ZZ)$.  To
construct the cocycles $\eta (\bbb , \fff , k)$, one specializes a
``universal'' cocycle $\Psi \in H^{m-1} (\SL_{m} (\ZZ); M)$, where $M$
is a certain module.  After choosing $\bbb , \fff , k$, one plugs
$U$-invariant parameters into $M$ to obtain a module $M_{k}$, which is
$\CC$ with a nontrivial $\SL_{m} (\ZZ)$-action, and a class $\Psi(\bbb
, \fff , k)\in H^{m-1} (\SL_{m} (\ZZ); M_{k})$.  Then $\eta (\bbb ,
\fff , k)$ is obtained by restriction, after realizing $U$ as a
subgroup of $\SL_{m} (\ZZ)$ via a regular representation; note that
$M_{k}$ restricted to $U$ is trivial.

\end{section}

\begin{section}{Scaled trace forms and the main result}\label{sec:scaledtrace}

We now return to perfect forms.  The quadratic form $\phi_\alpha$ from
Lemma \ref{lem:split} will play an imporant role in Theorem
\ref{thm:main}, so we give it a name: 
\begin{defn}
For $\alpha \in F$, the \emph{scaled trace form associated to $\alpha$} is the map 
\[\phi_\alpha: \OO \to \QQ\] given by $\phi_\alpha(\eta) = \Tr_{F/\QQ}(\alpha \eta^2)$.
\end{defn}

For the remainder of the paper, fix a $\ZZ$-basis $\B =\{\omega_1,
\ldots, \omega_m\}$ of $\OO$.  Then for $\x = \sum x_i \omega_i \in
\OO$ with $x_i \in \ZZ$, we have
\[\phi_\alpha(\x) = \sum_{1 \leq i,j \leq m}\Tr_{F/\QQ}(\alpha \omega_i \omega_j)x_i x_j.\]
In particular, fixing $\B$ allows us to view the form $\phi_\alpha$ as
an $m$-ary quadratic form $[\phi_\alpha]_\B$ over $\QQ$.

Let $V$ denote the $m(m+1)/2$-dimensional $\RR$-vector space of
$m$-ary quadratic forms over $\RR$.  Note that for $\alpha , \beta \in
F$, we have
\[[\phi_{\alpha+\beta}]_\B = [\phi_\alpha]_\B + [\phi_\beta]_\B.\] In
particular the image of $F\otimes \RR$ in $V$ is an $m$-dimensional
subspace.  The form $\phi_\alpha$ is positive-definite if $\alpha \gg
0$.  Let $C \subseteq V$ denote the real cone of positive-definite $m$-ary
quadratic forms, and let $C^+_\B$ denote the subcone corresponding to
the totally positive scaled trace forms.  More precisely, let
\[C^+_\B=\Cone(\{[\phi_\alpha]_\B \;|\; \alpha \gg 0\}) \otimes \RR.\]

Let $X$ be the global symmetric space $G/K$, where $G = \SL_{m} (\RR)$
and $K=\SO (m)$.  Recall (\S\ref{ss:wr}) that we have an isomorphism $X\simeq
C/\RR_{>0}$, where $\RR_{>0}$ acts on $C$ by homotheties.  We denote
by $X^{+}_{\B}$ the image of $C^+_{\B}$ in $X$ under the projection
$C\rightarrow X$.

\begin{lem}\label{lem:geodesic}
There exists an $\RR$-split torus $A \subset G$ and a point
$x_\B\in X$ such that 
\[
X^+_\B = \{a \circ x_\B\;|\;a \in A\},
\]
where $\circ$ is the geodesic action of $A$ on $X$ (\S\ref{ss:ga}).
\end{lem}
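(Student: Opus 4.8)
The plan is to identify $X^+_\B$ as a single geodesic-action orbit by producing both the torus $A$ and the basepoint $x_\B$ explicitly from the scaled trace forms. First I would observe that the map $\alpha\mapsto [\phi_\alpha]_\B$ is $\ZZ$-linear (indeed $\RR$-linear after tensoring), so the image of $F\otimes\RR$ in $V$ is an $m$-dimensional linear subspace, and the totally positive cone inside it is $C^+_\B$. The natural basepoint to take is $x_\B$ = the image in $X$ of the trace form $\phi_1 = \Tr_{F/\QQ}(x^2)$, normalized to determinant one; this is a genuine positive-definite form since $1\gg 0$. The content of the lemma is then that translating $x_\B$ by the geodesic action of a suitable torus sweeps out exactly the totally positive scaled trace forms modulo homotheties.

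The key step is to recognize the relevant torus. Diagonalizing via the embeddings $\iota_1,\dots,\iota_m$: there is $g_0\in\GL_m(\RR)$ such that $g_0^t[\phi_1]_\B g_0 = I$ and simultaneously $g_0^t[\phi_\alpha]_\B g_0 = \diag(\iota_1(\alpha),\dots,\iota_m(\alpha))$ for all $\alpha\in F$ (this is just the statement that the $m$ real embeddings simultaneously diagonalize the family of trace forms, since $\Tr_{F/\QQ}(\alpha\omega_i\omega_j) = \sum_k \iota_k(\alpha)\iota_k(\omega_i)\iota_k(\omega_j)$). Thus in the $g_0$-coordinates the scaled trace form $\phi_\alpha$ becomes the diagonal form $\diag(\iota_1(\alpha),\dots,\iota_m(\alpha))$, which is positive-definite exactly when $\alpha\gg 0$, and its image in $X$ after rescaling to determinant one lies on the standard maximal $\RR$-split diagonal torus orbit through the identity coset. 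So I would let $A$ be the image in $G=\SL_m(\RR)$ of the diagonal torus conjugated by $g_0$ — more precisely, $A = g_0 D g_0^{-1}$ where $D$ is the group of diagonal matrices in $\SL_m(\RR)$ with positive entries — and let $x_\B$ be the point of $X$ corresponding to the $g_0$-coordinates. The geodesic action of $A$ on $X$, taken with respect to the basepoint $x_\B$, agrees at $x_\B$ with the ordinary action of $A$ (as noted in \S\ref{ss:ga}), and the ordinary $A$-orbit of $x_\B$ is precisely the set of determinant-one diagonal forms in $g_0$-coordinates, which is $X^+_\B$.

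The main obstacle is the bookkeeping needed to phrase this correctly in terms of the \emph{geodesic} action rather than the ordinary group action: one must exhibit $A$ as $A_{P,x_\B}$ for an appropriate parabolic $P$, or else simply note that since $A$ is $\theta_{x_\B}$-stable the geodesic action of $A$ through $x_\B$ coincides with the ordinary action — but some care is required because $\iota_k(\alpha)$ need not have product one, so one is genuinely working modulo homotheties and the torus $A$ has rank $m-1$ as a subgroup of $\SL_m$, not $m$ as a group of diagonal scalings of $C^+_\B$. I would handle this by first working in $\GL_m(\RR)$ acting on the full cone $C$, where the statement is a clean orbit identification, and only then passing to the quotient $X = C/\RR_{>0}$, where the $\RR_{>0}$ of homotheties collapses the $m$-dimensional torus of diagonal scalings to the $(m-1)$-dimensional split torus $A\subset\SL_m(\RR)$. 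The remaining verifications — that $C^+_\B$ really is the cone generated by the $[\phi_\alpha]_\B$ with $\alpha\gg0$, and that this cone maps onto the claimed $A$-orbit — are then routine given the simultaneous diagonalization.
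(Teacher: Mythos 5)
Your proposal is correct and follows essentially the same route as the paper: both arguments rest on the simultaneous diagonalization $\Tr_{F/\QQ}(\alpha\omega_i\omega_j)=\sum_k\alpha^k\omega_i^k\omega_j^k$, which exhibits $C^+_\B$ as the orbit of the positive diagonal torus conjugated by the matrix $\Omega=(\omega_i^j)$ of embeddings (the paper writes $S(\alpha)=\Omega a\,(\Omega a)^t$ and takes $A=\Upsilon A_\infty\Upsilon^{-1}$ with $\Upsilon$ the determinant-one rescaling of $\Omega$; your $g_0$ is the same matrix up to transpose-inverse, reflecting only a choice of action convention). Your handling of the homothety quotient and of the agreement between the geodesic and ordinary actions at the basepoint matches the paper's ``transport de structure'' step.
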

\begin{proof}
We begin with some computations in $C$.  For $\alpha \in F$
totally positive, let $S (\alpha ) = \mat{S (\alpha )_{ij}}$ denote
the positive-definite symmetric $m \times m$ matrix corresponding to
$[\phi_\alpha]_\B$.  Thus
\[S (\alpha )_{ij} = \Tr_{F/\QQ}(\alpha
\omega_i\omega_j) = \sum_{1\leq k \leq m}
\alpha^k\omega_i^k\omega_j^k.
\] 
We can write $S (\alpha )$ as $g (\alpha ) g (\alpha )^t$, where $g
(\alpha )$ is given by
\[g (\alpha )_{ij} = \sqrt{\alpha^j}\omega_i^j.\]
This implies $g (\alpha) =\Omega a$, where
\begin{equation}\label{eq:omegaanda}
\Omega_{ij} = \omega_i^j\quad \text{and} \quad a=\diag(\sqrt{\alpha^1}, \ldots, \sqrt{\alpha^m}).
\end{equation}
From these considerations it is clear that the cone $C^{+}_{\B}\subset
C$ is given by the set of matrices of the form $\Omega a$, where $a$
is allowed to vary over \emph{all} positive real diagonal matrices,
not just those of the special form on the right of
\eqref{eq:omegaanda}.

Now we pass to $X$ by modding out by homotheties.  Let $x_{\B}$ be the
image of the point $[\phi_{1}]_{\B}$.  Let $\Upsilon$ be the unique
positive multiple of $\Omega$ such that $\det (\Upsilon) = 1$; this
also maps onto $x_{\B}$.  Then the subset of $C$ given by
\[
\{\Upsilon  a \mid a = \diag (a_{1}, \dotsc , a_{m}), a_{k}\in \RR_{>0},
a_{1}\dotsb a_{m}=1\}
\]
maps diffeomorphically onto $X^{+}_{\B}\subset X$.  Let
$P_{\infty}\subset G$ be the parabolic subgroup of upper-triangular
matrices, and let $A_{\infty} \subset P_{\infty}$ be the diagonal
subgroup.  Let $x$ denote the point of $X$ fixed by $K$. Note that
$X_{\B}^+$ is precisely a translate of the submanifold defined by the
geodesic action of $A_\infty$ on $x$:
\[
X_{\B}^+ = \Upsilon \cdot \{a \circ x \mid a \in A_\infty\}.
\]
By ``transport de structure'' we can express this at the basepoint,
that is
$x_\B = \Upsilon \cdot x$
\[X_\B = \{b \circ x_\B \mid b \in \Upsilon A_\infty \Upsilon^{-1}\}.\]
More precisely, 
\begin{equation}\label{eq:geodesic} 
(\Upsilon a) \cdot x = (\Upsilon a \Upsilon^{-1}) \Upsilon \cdot x = (\Upsilon a \Upsilon^{-1}) \cdot x_\B.
\end{equation} 

Now $\Upsilon a \Upsilon^{-1} \in \Upsilon A_\infty \Upsilon^{-1}$,
and \eqref{eq:geodesic} is exactly the geodesic action of the element
$\Upsilon a \Upsilon^{-1}$ on the point $x_\B $.  Thus we may take $A
= \Upsilon A_{\infty} \Upsilon^{-1}$, and the lemma follows.
\end{proof}

By Theorem~\ref{thm:scaledAnform}, to find a perfect $n$-ary form over
$F$, one can look for scaled trace forms with many linearly
independent minimal vectors.  Specifically, a scaled trace form
$f_\alpha = \phi_\alpha \otimes A_n$ is perfect if
\begin{equation}\label{eq:linsystemscaled}
\left\{\Tr_{F/\QQ}(v^tXv) = m(f_\alpha)\right\}_{v \in M(f_\alpha)}
\end{equation} defines $mn(n + 1)/2$ linearly independent conditions
on the space of quadratic forms.  Since $A_n$ is perfect over $\QQ$,
the minimal vectors of $A_n$ define $n(n+1)/2$ linearly independent
conditions.  Thus by Theorem \ref{thm:scaledAnform}, if $\phi_\alpha$
has $m$ linearly independent minimal vectors, then
\eqref{eq:linsystemscaled} will impose $mn(n+1)/2$ linearly
independent relations in the space of quadratic forms over $F$, and
$f_{\alpha}$ will be perfect. We now prove our main result, which
asserts that such an $\alpha$ can always be found:

\begin{thm}\label{thm:main}
There exists $\alpha \in F$ totally positive such that $\phi_{\alpha}$
has $m$ linearly independent minimal vectors, and thus
\[f_{\alpha }(x_1, \ldots, x_n) = \Tr_{F/\QQ}\left(\alpha A_n(x_1, \ldots ,  x_n)\right)\] 
is a perfect form over $F$.
\end{thm}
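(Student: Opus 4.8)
The plan is to exploit the geometric picture established in Lemma~\ref{lem:geodesic}: the subvariety $X^+_\B \subset X$ is the orbit of the point $x_\B$ under the geodesic action of an $\RR$-split torus $A = \Upsilon A_\infty \Upsilon^{-1}$. The form $\phi_\alpha$ having $m$ linearly independent minimal vectors is precisely the condition that the corresponding point of $X$ lie in the well-rounded retract $W$ (\S\ref{ss:wr}); so what we must show is that $X^+_\B \cap W \neq \emptyset$, and moreover that it contains a point coming from a totally positive $\alpha \in F$ (as opposed to an arbitrary real diagonal matrix). The first step, then, is to reduce the theorem to showing that the geodesic orbit $\{a \circ x_\B \mid a \in A\}$ meets $W$.

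The second, and central, step is to produce this intersection point using the Eisenstein cocycle (\S\ref{ss:ec}). The group $U$ of totally positive units of $\OO$ embeds in $\SL_m(\ZZ)$ via the regular representation on $\B$; up to conjugation by $\Upsilon$ this realizes $U$ as a cocompact lattice inside the geodesic torus $A$ (this is essentially Dirichlet's unit theorem together with the fact that units act on $C^+_\B$ through the diagonal torus). Hence $U$ acts on the orbit $X^+_\B \cong A$ with compact quotient $U \backslash X^+_\B$, an $(m-1)$-dimensional closed manifold. The key point is that this quotient carries a nonzero homology class in degree $m-1$: indeed the Eisenstein/Sczech cocycle $\eta(\bbb,\fff,k) \in H^{m-1}(U;\QQ)$ pairs nontrivially with the fundamental class of $U\backslash X^+_\B$, because its evaluation computes a special value $\zeta(\bbb,\fff;1-k)$ which can be arranged to be nonzero (for instance for $k$ even and $\fff$ chosen suitably, by Klingen--Siegel). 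Therefore $U\backslash X^+_\B$ is not null-homotopic in $\Gamma\backslash X$, for $\Gamma$ a suitable finite-index subgroup of $\SL_m(\ZZ)$.

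The third step is the topological conclusion. Since $W$ is an $\SL_m(\ZZ)$-equivariant deformation retract of $X$, it is also a $\Gamma$-equivariant deformation retract, so the inclusion $\Gamma\backslash W \hookrightarrow \Gamma\backslash X$ is a homotopy equivalence. If the compact cycle $U\backslash X^+_\B$ were disjoint from (the image of) $W$, then the retraction would push it off of a codimension-$\geq 1$ complement, and a dimension count --- $W$ has dimension equal to the virtual cohomological dimension, while the complement of $W$ in $X$ deformation-retracts to a lower-dimensional skeleton --- forces the image of the fundamental class of $U\backslash X^+_\B$ in $H_{m-1}(\Gamma\backslash X)$ to vanish, contradicting the nonvanishing supplied by the Eisenstein cocycle. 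Hence $X^+_\B \cap W \neq \emptyset$. A point in this intersection corresponds to a positive real diagonal matrix $a$ with $\Upsilon a$ well-rounded; pulling back along the parametrization $\alpha \mapsto \diag(\sqrt{\alpha^1},\dots,\sqrt{\alpha^m})$, and using that the totally positive elements of $F$ are dense in the positive orthant while well-roundedness is an open-ish condition (more precisely, one perturbs within the finitely many minimal-vector walls, or invokes that the well-rounded locus is a union of relatively open cells whose union with lower cells is closed), we obtain a totally positive $\alpha \in F$ with $\phi_\alpha$ having $m$ independent minimal vectors. Then Theorem~\ref{thm:scaledAnform} and the discussion preceding the statement show $f_\alpha$ is perfect.

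I expect the main obstacle to be the second step --- pinning down exactly why the Eisenstein cocycle forces the geodesic orbit to meet $W$. One must be careful that the class $\eta(\bbb,\fff,k)$ really is detected on the specific submanifold $U\backslash X^+_\B$ (and not merely on some abstract $K(U,1)$), which requires knowing that the inclusion $U\backslash X^+_\B \hookrightarrow \Gamma\backslash X$ induces on $H^{m-1}(-;\QQ)$ a map under which the universal Eisenstein class restricts to $\eta(\bbb,\fff,k)$; this is where the construction of $\Psi$ via the regular representation of $U$ inside $\SL_m(\ZZ)$ is used essentially, and choosing $\Gamma$ torsion-free and containing the image of $U$ is needed to make $U\backslash X^+_\B$ genuinely sit inside $\Gamma\backslash X$. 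The remaining steps are either geometry already recorded in Lemma~\ref{lem:geodesic} or standard facts about $W$.
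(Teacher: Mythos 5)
Your proposal follows essentially the same route as the paper: Lemma~\ref{lem:geodesic} identifies $X^+_\B$ with a geodesic torus orbit, the nonvanishing of special values detected by the Sczech--Eisenstein cocycle paired against the fundamental class of $U\backslash X^+_\B$ forces $X^+_\B\cap W\neq\emptyset$, and the rationality of the resulting $\alpha$ comes from the rational structure of $W$ and $C^+_\B$. The only cosmetic differences are that the paper simply takes $\fff=\OO$ (so the relevant special values are those of the Dedekind zeta function $\zeta_F(1-k)$, which do not vanish identically) and obtains a totally positive $\alpha\in F$ directly from the fact that $W$ is cut out by $\QQ$-linear equations and $C^+_\B$ is an explicit rational cone, rather than by your density/perturbation argument.
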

\begin{proof}
We must show that we can find $\alpha \gg 0$ such that
$[\phi_\alpha]_\B$ is well-rounded.  By Lemma~\ref{lem:geodesic}, we
know that a choice of basis $\B$ gives rise to a point $x_\B$ and a
maximal $\RR$-split torus $A$ such that $X^{+}_{\B} = \{a \circ
x_{\B}\mid a\in A\}$.  We will show that $X^+_\B \cap W \not =
\emptyset $, where $W$ is the retract for $\Gamma =\SL_{m} ( \ZZ)$
(\S\ref{ss:wr}), and that the inverse image in $C$ of any point in
$X^+_\B \cap W$ is a ray containing an $F$-rational point
$\phi_\alpha$.  This will prove the theorem.

To show $X^+_\B \cap W \not = \emptyset $ we use the Eisenstein
cocycle (\S\ref{ss:ec}).  Let $\fff = \OO$, so that $\zeta (\bbb ,
\fff ; s)$ is the Dedekind zeta function $\zeta_{F} (s)$, and $U$ is
the group of totally positive units.  Abbreviate $\Psi(\bbb , \fff ,
k)$ (respectively, $\eta(\bbb , \fff ,k)$) to $\Psi (k)$ (resp.,
$\eta (k)$).  Using the regular representation attached to the basis
$\B$, we have an injection $i\colon U\rightarrow \Gamma $.  Let
$M_{k}'$ be the module dual to $M_{k}$.  Since $M_{k}$ and $M'_{k}$
are trivial after restriction to $U$, we obtain induced maps
$i_{*}\colon H_{m-1} (U; \CC) \rightarrow H_{m-1} (\Gamma ; M'_{k})$
and $i^{*}\colon H^{m-1}(\Gamma ; M_{k}) \rightarrow H^{m-1} (U;
\CC)$.

Let $\innprod{\phantom{a}}{\phantom{a}}_{\star}$ the pairing between
$H^{m-1}$ and $H_{m-1}$, where $\star$ is either $U$ or $\Gamma $, and
where the target is $\CC \simeq M_{k}\otimes_{\Gamma} M'_{k}$.  Let
$\xi \in H_{m-1} (U;
\CC)$ be the fundamental class.  Then 
\[
\innprod{i_{*} (\xi)}{\Psi(k)}_{\Gamma} = \innprod{\xi}{i^{*}
(\Psi(k))}_{U} = \innprod{\xi}{\eta (k)}_{U} = \zeta_{F}(1-k).
\]
Since the special values $\zeta_{F} (1-k)$ do not vanish identically,
the class $i_{*} (\xi)$ pairs nontrivially with $\Psi (k)$ for some
$k$.  But it is easy to see that $i_{*} (\xi)$ is the same as the
class of $X^{+}_{\B} \pmod \Gamma $ in the homology of the quotient
$\Gamma \backslash X$.  If $X^{+}_{\B} \cap W$ were empty, then by the
discussion in \S\ref{ss:wr} the class of $X^{+}_{\B} \pmod \Gamma $
would pair trivially with all cohomology classes for all coefficient
modules, which is a contradiction.  Thus $X^{+}_{\B} \cap W\not
=\emptyset$.

To finish we must prove that the ray above an intersection point in
$X^{+}_{\B} \cap W$ contains a form $\phi_{\alpha}$ with $\alpha \in
F$.  This follows easily since $W$ is cut out by linear equations with
$\QQ$-coefficients and from the explicit form of the cone
$C^{+}_{\B}$.  This completes the proof of the theorem.
\end{proof}

\end{section}
\begin{section}{Real quadratic fields}
\subsection{Preliminaries} Let $d$ be a square-free positive integer,
and $\OO$ be the ring of integers in the real quadratic field
$F=\QQ(\sqrt{d})$.  Then $\OO$ is a $\ZZ$-lattice in $\RR^2$,
generated by $1$ and $\omega$, where $\omega=(1+\sqrt{d})/2$ if $d
\equiv 1 \pmod 4$ and $\omega=\sqrt{d}$ otherwise.  The discriminant
$D$ equals $d$ if $d \equiv 1 \pmod 4$ and equals $4d$ otherwise.

\subsection{Scaled trace forms} Let $C$ be the cone of
positive-definite binary quadratic forms.  Modding out by homotheties,
we can identify $C/\RR_{>0}$ with the upper half-plane $\hp$.  One
such identification is given by 
\begin{equation}\label{eq:identification}
x+iy \longmapsto \mat{1&-x\\-x&x^2 +
y^2}.
\end{equation}
Fixing a $\ZZ$-basis $\B = \{1,\omega\}$ for $\OO$, we
consider the subcone $C^+_\B \subset C$ of totally positive scaled
trace forms as described in \S\ref{sec:scaledtrace}.

By Lemma~\ref{lem:geodesic}, it follows that the image of
$C^+_\B$ is a geodesic in $\hp$.  Considering
\eqref{eq:identification} and our choice of basis,
we see that $C^+_\B$ corresponds to
\[
\left\{\left.\mat{\Tr_{F/\QQ}(\alpha) & \Tr_{F/\QQ}(\alpha \omega) \\
\Tr_{F/\QQ}(\alpha \omega) & \Tr_{F/\QQ}(\alpha \omega^2) }\ \right|\
\alpha \gg 0\right \} \otimes \RR.
\] 
On $\hp$ this becomes the geodesic $X^{+}_{\B}$ defined by
\begin{align*}
(x + \frac{1}{2})^2 + y^2 &= \frac{d}{4} \quad \text{if $d \equiv 1 \pmod 4$,}\\
x^2 + y^2 &= d \quad \text{otherwise.}
\end{align*}
The well-rounded retract $W\subset \hp$ is the infinite trivalent tree
shown in Figure~\ref{fig:tree}.  The crenellation comes from arcs of
circles of the form $(x-n)^2 + y^2 = 1$, where $n \in \ZZ$.  One can
compute a point $x_0+iy_0$ of the intersection of $W$ and the geodesic
corresponding to $C^+_\B$.  Let
\[
X(n) = \begin{cases}
\displaystyle{\frac{4n^2+d-5}{4+8n}} &\text{if $d \equiv 1 \pmod 4$,}\\
\displaystyle{\frac{n^2+d-1}{2n}} &\text{otherwise.}
\end{cases}
\]
Then $x_0 = \min_{n \in \ZZ}|X(n)|$, and $y_0$ can be explicitly
computed from $x_0$.  Specifically, let $\tilde{n}$ be a non-negative
integer such that $X(\tilde{n}) = x_0$.  Then $y_0$ satisfies
\[(x_0-\tilde{n})^2+y_0^2 = 1.\] The corresponding scaled trace form
is $\phi_{\alpha}$, where
\[\alpha = \begin{cases}
\displaystyle{ \frac{d-(2x_0+1)\sqrt{d}}{2d}}&\text{if $d \equiv 1 \pmod 4$,}\\
 \displaystyle{\frac{d-x_0\sqrt{d}}{2d}}&\text{otherwise.}
\end{cases}
\] 

\begin{figure}[htb]\label{fig:tree}
\includegraphics[width=0.7\textwidth]{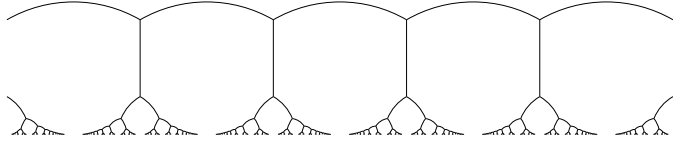}
\caption{The well-rounded retract $W \subset \hp$.}
\end{figure}

We summarize the results of this computation below.
\begin{prop}\label{prop:realquadtraceform}
Let $\alpha$ and $\tilde{n}$ be as above.  Let $\eta =
\tilde{n}+\omega$.  Then the scaled trace form $\phi_\alpha$ is
minimized at $\{\pm 1, \pm \eta\}$.
\end{prop}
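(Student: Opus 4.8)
The plan is to pass to the upper half-plane model via \eqref{eq:identification} and reduce the statement to a single elementary inequality about $\tilde n$. First I would set up the dictionary between $\phi_\alpha$ and the point $z_0 = x_0 + iy_0 \in \hp$. A one-line computation with the explicit formula for $\alpha$ gives $\Tr_{F/\QQ}(\alpha) = 1$ (the $\sqrt d$-terms cancel), and, since $\alpha$ lies on the geodesic $X^{+}_{\B}$, the two remaining entries of the Gram matrix of $[\phi_\alpha]_{\B}$ are $\Tr_{F/\QQ}(\alpha\omega) = -x_0$ and $\Tr_{F/\QQ}(\alpha\omega^2) = x_0^2 + y_0^2$ --- these being just the defining equation of $X^{+}_{\B}$ rewritten. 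Hence $[\phi_\alpha]_{\B}$ is exactly the matrix attached to $z_0$ by the identification \eqref{eq:identification}, so that for $a,b\in\ZZ$ one has
\[
\phi_\alpha(a + b\omega) = a^2 - 2abx_0 + b^2(x_0^2 + y_0^2) = (a - bx_0)^2 + b^2 y_0^2 = |a - bz_0|^2 .
\]
Thus $m(\phi_\alpha)$ and $M(\phi_\alpha)$ are computed by minimizing $|a - bz_0|^2$ over $(a,b)\in\ZZ^2\smallsetminus\{0\}$ modulo sign; in particular $\phi_\alpha(1) = 1$ and $\phi_\alpha(\eta) = |\tilde n - z_0|^2 = (x_0 - \tilde n)^2 + y_0^2 = 1$ by the defining relation for $y_0$, so $\{\pm 1, \pm\eta\}\subseteq M(\phi_\alpha)$ as soon as we know $m(\phi_\alpha) = 1$.

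The crux is the inequality $|x_0 - \tilde n| \le \tfrac12$. Geometrically it says that $z_0$ sits on the stretch of the circle $(x-\tilde n)^2 + y^2 = 1$ that actually belongs to the crenellated tree $W$ --- the arc between the two trivalent vertices at $x = \tilde n \pm \tfrac12$ --- rather than on a part of $W$ governed by a neighboring circle. I would prove it directly from the minimality of $\tilde n$: for $d\not\equiv 1\pmod 4$ one has $x_0 - \tilde n = X(\tilde n) - \tilde n = \bigl((d-1) - \tilde n^2\bigr)/(2\tilde n)$, and the identity $X(n) - X(n') = \frac{(n - n')(nn' - (d-1))}{2nn'}$ together with $X(\tilde n)\le X(\tilde n\pm 1)$ forces $\tilde n^2 - \tilde n \le d - 1 \le \tilde n^2 + \tilde n$, that is $|(d-1)-\tilde n^2|\le \tilde n$, that is $|x_0 - \tilde n|\le\tfrac12$. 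The case $d\equiv 1\pmod 4$ is the same computation with the corresponding formula for $X(n)$, and the finitely many small $\tilde n$ for which $X(\tilde n - 1)$ is unavailable are checked directly.

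To finish I would bound $|a - bz_0|^2$ from below. From $|x_0 - \tilde n|\le\tfrac12$ we get $y_0^2 = 1 - (x_0 - \tilde n)^2 \ge \tfrac34$. If $|b|\ge 2$ then $|a - bz_0|^2 \ge b^2 y_0^2 \ge 3$; if $b = 0$ then $|a|^2\ge 1$ with equality only for $a=\pm 1$; and if $b = \pm 1$, after replacing $(a,b)$ by $\pm(a,b)$ we may take $b = 1$, and then $|a - z_0|^2 = (a-x_0)^2 + y_0^2 \ge \tfrac14 + \tfrac34 = 1$ for every integer $a\ne\tilde n$, because $\tilde n$ is the integer within $\tfrac12$ of $x_0$. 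Hence $m(\phi_\alpha) = 1$ and the minimum is attained exactly at $\pm 1$ and $\pm\eta$. I expect the main obstacle to be exactly this inequality $|x_0 - \tilde n|\le\tfrac12$ --- seeing that the minimizing $\tilde n$ really lands $z_0$ in the interior of a crenellation arc of $W$ --- together with one caveat to flag: in the degenerate case $|x_0 - \tilde n| = \tfrac12$, where $z_0$ is a vertex of $W$ (this happens for the sporadic $d = \tilde n^2 \pm \tilde n + 1$ and the analogous $d\equiv 1$ values), the pair $a = \tilde n\pm 1$, $b = 1$ also realizes the minimum, so $M(\phi_\alpha)$ then picks up the extra pair $\pm(\tilde n\pm 1 + \omega)$ and the statement should be read as furnishing the two linearly independent minimal vectors $\{\pm 1,\pm\eta\}$ needed for Theorem~\ref{thm:scaledAnform} rather than as an exhaustive list.
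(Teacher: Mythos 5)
Your proof is correct and follows the same route the paper intends: the paper offers no formal argument for this proposition (it is presented as a summary of the preceding computation), and your writeup supplies exactly the missing details, namely that $[\phi_\alpha]_{\B}$ is the Gram matrix attached to $z_0=x_0+iy_0$ under \eqref{eq:identification}, and the key verification $|x_0-\tilde{n}|\le \tfrac12$ deduced from the minimality of $|X(\tilde{n})|$, which is what guarantees $z_0$ actually lies on the arc of $W$ centered at $\tilde{n}$ and yields $m(\phi_\alpha)=1$. Your caveat about the boundary case $|x_0-\tilde{n}|=\tfrac12$ is also well taken: for instance $d=3$ gives $X(1)=X(2)=\tfrac32$, the point $z_0$ is a vertex of $W$, and $M(\phi_\alpha)$ contains a third pair, so the proposition must indeed be read as asserting that $\{\pm 1,\pm\eta\}$ are among the minimal vectors (which is all that Theorem~\ref{thm:scaledAnform} requires) rather than as an exhaustive list.
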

\subsection{Binary perfect quadratic forms} Given a binary perfect
form as an initial input, there is an algorithm to compute the
$\GL_2(\OO)$-equivalence classes of binary perfect forms over $F$
\cite{Gmod}.  This was done in \cite{Ong} for $F = \QQ(\sqrt{2})$,
$\QQ(\sqrt{3})$, $\QQ(\sqrt{5})$ and in \cite{lei6} for
$F=\QQ(\sqrt{6})$. Using Proposition~\ref{prop:realquadtraceform}, we
compute the $\GL_2(\OO)$-equivalence classes of these forms over
$F=\QQ(\sqrt{d})$ for square-free $d\leq 66$.  The computations were
done using \verb+Magma+ \cite{magma} and \verb+PORTA+ \cite{porta}.
The number of $\GL_2(\OO)$-classes of perfect forms is given in
Table~\ref{tab:perfectforms}.  Figure \ref{graph:perfect} shows a plot
of the data $(D,N_{D})$ from Table \ref{tab:perfectforms}.

\begin{table}[htb]
\[
\begin{array}{|ccc|ccc|ccc|ccc|}\hline
D&h_{D}&N_{D}&D&h_{D}&N_{D}&D&h_{D}&N_{D}&D&h_{D}&N_{D}\\\hline
5 & 1 & 2 & 37 & 1 & 97 & 76 & 1 & 404 & 168 & 2 & 918 \\
8 & 1 & 2 & 40 & 2 & 52 & 88 & 1 & 645 & 172 & 1 & 2683 \\
12 & 1 & 3 & 41 & 1 & 144 & 92 & 1 & 343 & 184 & 1 & 4306 \\
13 & 1 & 9 & 44 & 1 & 66 & 104 & 2 & 326 & 188 & 1 & 1358 \\
17 & 1 & 34 & 53 & 1 & 93 & 120 & 2 & 524 & 204 & 2 & 1992 \\
21 & 1 & 16 & 56 & 1 & 144 & 124 & 1 & 1473 & 220 & 2 & 2397 \\
24 & 1 & 22 & 57 & 1 & 515 & 136 & 2 & 1135 & 232 & 2 & 3042 \\
28 & 1 & 30 & 60 & 2 & 89 & 140 & 2 & 487 & 236 & 1 & 2977 \\
29 & 1 & 39 & 61 & 1 & 206 & 152 & 1 & 995 & 248 & 1 & 2360 \\
33 & 1 & 149 & 65 & 2 & 301 & 156 & 2 & 941 & 264 & 2 & 3139 \\
\hline
\end{array}\]
\caption{$\GL_2(\OO)$-classes of perfect binary quadratic forms over
real quadratic fields.  The discriminant is $D$, the class number of
$\QQ (\sqrt{D})$ is $h_{D}$, and the number of inequivalent forms is $N_{D}$.}
\label{tab:perfectforms}
\end{table}

\begin{figure}[htb]
\begin{center}
\input{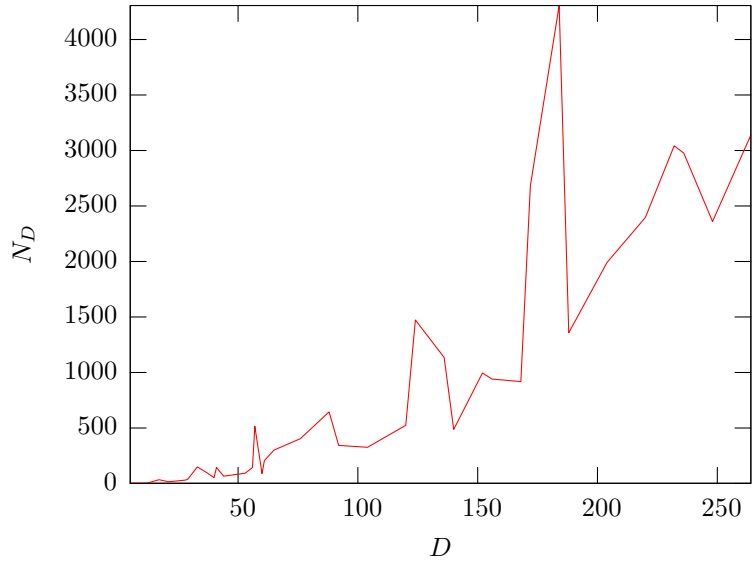}
\end{center}
\caption{Perfect forms by discriminant\label{graph:perfect}}
\end{figure}

\end{section}
\bibliography{perfectforms}   
\end{document}